\newtheorem{theorem}{Theorem}
\newtheorem{proposition}[theorem]{Proposition}
\newtheorem{ex}[theorem]{Example}
\newenvironment{example}{\begin{ex}\rm}{\end{ex}}
\newcounter{FNC}[page]
\def\fauxfootnote#1{{\addtocounter{FNC}{2}$^\fnsymbol{FNC}$%
     \let\thefootnote\relax\footnotetext{$^\fnsymbol{FNC}$#1}}}
\newcommand{\calV}{\mathcal{V}}
\newcommand{\R}{\mathbb{R}}
\newcommand{\C}{\mathbb{C}}
\newcommand{\Log}{\mbox{Log}}
\title[Betti numbers of fewnomial hypersurfaces]{Betti number bounds for
  fewnomial hypersurfaces via stratified Morse Theory} 
\author{Fr\'ed\'eric Bihan}
\address{Laboratoire de Math\'ematiques\\
         Universit\'e de Savoie\\
         73376 Le Bourget-du-Lac Cedex\\
         France}
\email{Frederic.Bihan@univ-savoie.fr}
\urladdr{http://www.lama.univ-savoie.fr/\~{}bihan/}
\author{Frank Sottile}
\address{Department of Mathematics\\
         Texas A\&M University\\
         College Station\\
         Texas \ 77843\\
         USA}
\email{sottile@math.tamu.edu}
\urladdr{http://www.math.tamu.edu/\~{}sottile/}
\thanks{Sottile supported by NSF CAREER grant DMS-0538734 and NSF
  grant DMS-0701050}  
\keywords{stratified Morse theory, fewnomials, Betti numbers}
\subjclass[2000]{14P25}
\begin{document}
%%%%%%%%%%%%%%%%%%%%%%%%%%%%%%%%%%%%%%%%%%%%%%%%%%%%%%%%%%%%%%%%%%%%%%%%%%%

%%%%%%%%%%%%%%%%%%%%%%%%%%%%%%%%%%%%%%%%%%%%%%%%%%%%%%%%%%%%%%%%%%%%%%%%%%%
\begin{abstract}
  We use stratified Morse theory for a manifold with
  corners to give a new bound for the sum of the Betti numbers of a fewnomial hypersurface
  in $\R^N_{>}$.
\end{abstract}
%%%%%%%%%%%%%%%%%%%%%%%%%%%%%%%%%%%%%%%%%%%%%%%%%%%%%%%%%%%%%%%%%%%%%%%%%%%
\maketitle

In the book ``Fewnomials''~\cite{Kh91}, A. Khovanskii gives 
bounds on the Betti numbers of varieties $X$ in $\R^n$ or in the
positive orthant $\R^n_{>}$. 
These varieties include algebraic varieties, 
varieties defined by polynomial functions in the variables and
exponentials of the variables, and also by even more general functions.
Here, we consider only algebraic varieties.
For these, Khovanskii bounds the sum \Blue{$b_*(X)$} of Betti numbers of
$X$ by a function that depends on $n$, the codimension of $X$, and the
total number of monomials appearing (with non zero coefficients) 
in polynomial equations defining $X$.

For real algebraic varieties $X$, the problem of bounding $b_*(X)$
has a long history.
O. A. Oleinik~\cite{O51} and J. Milnor~\cite{Mi64}
used Morse theory to estimate the number 
of critical points of a suitable Morse function to obtain a bound.
R. Thom~\cite{T64} used Smith Theory to bound the mod-2 Betti numbers.
This Smith-Thom bound has the form $b_*(X) \leq b_*(X_{\C})$, where
$X_{\C}$ is the complex variety defined by the same polynomials as $X$.
For instance, if $X \subset \R^n$ is defined by 
polynomials of degree at most $d$, then Milnor's bound is $b_*(X) \leq
d(2d-1)^n$.
If $X \subset {(\R \setminus \{0\})}^n$ is a smooth hypersurface
defined by a polynomial with Newton polytope $P$, then the Smith-Thom bound
is $b_*(X) \leq b_*(X_{\C})=n! \cdot \mbox{Vol}(P)$, where $\mbox{Vol}(P)$ is the usual
volume of $P$. 
We refer to~\cite{R92} for an informative history of the subject, and to the book~\cite{BR90}
for more details.

These bounds are given in term of degree or volume of a Newton polytope which are
numerical deformation invariants of the complex variety $X_{\C}$. 
In contrast, the topology of a real algebraic variety depends 
on the coefficients of its defining equations, 
and in particular, on the number of monomials involved in these equations.
For instance, Descartes's rule of signs implies that the number
of positive roots of a real univariate polynomial is less than its
number of monomials, but the number of complex roots is equal to its
degree.
Khovanskii's bound can be seen as a generalization of this Descartes
bound. 
It is smaller than the previous bounds when the defining equations
have few monomials compared to their degrees. 
While it has always been clear that Khovanskii's bounds
are unrealistically large, it appears very challenging to
sharpen them. 
Some progress has been made recently for the number of non degenerate
positive solutions to a system of $n$ polynomials in $n$
variables~\cite{BS}. 
Here, we bound $b_*(X)$, when $X$ is a fewnomial hypersurface.

Suppose that $X \subset \R^n_>$ is a smooth hypersurface defined by a Laurent polynomial 
with $n{+}l{+}1$ distinct monomial terms. 
Khovanskii~\cite{Kh91} (Corollary 4, p. 91)
showed  that
 \begin{equation}\label{eq:Khovanskii}
    b_*(X)\ \leq\   (2n^2-n+1)^{n{+}l} (2n)^{n-1} 2^{\binom{n{+}l}{2}}\,.
 \end{equation}

We use the new upper bound~\cite{BS} of $\frac{e^2+3}{4}
2^{\binom{l}{2}}n^l$ on the number of non degenerate positive solutions to a system of $n$
polynomials in $n$ variables having $n{+}l{+}1$ monomial terms, together with
stratified Morse theory for a manifold with corners~\cite{GM88} to give a new bound for 
the sum $b_*(X)$ of the Betti numbers of a fewnomial hypersurface.
Fix positive integers $N\geq n$ and $l$.

%%%%%%%%%%%%%%%%%%%%%%%%%%%%%%%%%%%%%%%%%%%%%%%%%%%%%%%%%%%%%%%%%%%%%%%%%%%%
\begin{theorem}\label{Th:Betti}
  Let $X$ be a hypersurface in $\R^N_>$
  defined by a polynomial with $n{+}l{+}1$ monomials whose exponent
  vectors have
  affine span of dimension $n$.
  Then
\[
    b_*(X) \ <\   \frac{e^2+3}{4} 2^{\binom{l}{2}}
    \cdot \sum_{i=0}^{n} \tbinom{n}{i} i^l\ .
 \]
\end{theorem}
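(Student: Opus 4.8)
The plan is to reduce to the case $N=n$, compactify $\R^n_>$ to a manifold with corners, and run stratified Morse theory with a generic ``logarithmically linear'' function, bounding the critical points contributed by each boundary stratum with the fewnomial bound of~\cite{BS}. First I would reduce to $N=n$: since the $n{+}l{+}1$ exponent vectors have affine span of dimension $n$, an invertible monomial substitution on $\R^N_>$ turns the defining polynomial into one involving only $n$ of the coordinates, so $X\cong Y\times\R^{N-n}_>$ with $Y\subset\R^n_>$ a hypersurface defined by $f=\sum_{k=0}^{n+l}c_kx^{a_k}$ whose $n{+}l{+}1$ exponents affinely span $\R^n$; as $\R^{N-n}_>$ is contractible, $b_*(X)=b_*(Y)$. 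After multiplying $f$ by a monomial I may also assume $a_0=0$ and $a_k\in\Z^n_{\ge0}$.

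Next I would take the closure $\overline Y$ of $Y$ in a manifold-with-corners compactification of $\R^n_>$ (modelled on the positive part of a smooth projective toric variety refining the normal fan of the Newton polytope of $f$), chosen so that for generic coefficients $\overline Y$ is a \emph{compact} manifold with corners having $Y$ as its interior; then $Y\hookrightarrow\overline Y$ is a homotopy equivalence, so $b_*(Y)=b_*(\overline Y)$, and the non-generic case follows by a small perturbation of the coefficients that does not change the diffeomorphism type of the smooth $Y$. The boundary strata are indexed by coordinate subspaces: for $S\subseteq\{1,\dots,n\}$ the stratum $\overline Y\cap\{x_j=0:j\notin S\}$ is the hypersurface in $\R^S_>$ cut out by $f_S$, the sum of the terms of $f$ with $a_k\in\R^S$. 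A short convexity argument from the affine-span hypothesis shows $f_S$ has at most $|S|{+}l{+}1$ monomials: if its exponents span a $j$-dimensional affine subspace, the remaining exponents must supply the other $n-j$ directions, so the term count is at most $j{+}l{+}1\le|S|{+}l{+}1$.

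Now I would choose a generic $\ell=\sum_j\lambda_j\Log x_j$; in logarithmic coordinates this is a generic linear function, hence (for generic $\lambda$) a stratified Morse function on $\overline Y$ with finitely many nondegenerate critical points, all lying on strata. By stratified Morse theory for manifolds with corners~\cite{GM88}, $b_*(\overline Y)$ is bounded by the number of stratified critical points with nontrivial Morse data; the normal Morse data computation shows that a critical point lying on a boundary face contributes only when $\ell$ points \emph{into} $\overline Y$ across that face, which for generic $\lambda$ happens at exactly one face per coordinate subset $S$. Thus $b_*(\overline Y)\le\sum_{S}\#\{\text{critical points of }\ell|_{\overline Y\cap\{x_j=0:j\notin S\}}\}$. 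On the stratum for $S$ with $|S|=i$ these are the nondegenerate positive solutions of the square system $f_S=0$, $\lambda_a\,\partial_bf_S=\lambda_b\,\partial_af_S$, which in logarithmic coordinates involves only the $\le i{+}l{+}1$ monomials of $f_S$, so by the bound of~\cite{BS} their number is at most $\tfrac{e^2+3}{4}2^{\binom l2}i^l$ (and is $0$ for $i=0$). Summing over the $\binom ni$ subsets of size $i$ gives
\[
 b_*(X)=b_*(\overline Y)\ \le\ \tfrac{e^2+3}{4}2^{\binom l2}\sum_{i=0}^{n}\binom ni i^l ,
\]
and the inequality is strict because of the built-in slack (empty low-dimensional strata, and the crude passage from the true monomial count $i{+}l_S{+}1\le i{+}l{+}1$ to $l$).

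The main obstacle is the middle part: setting up the compactification and verifying that the logarithmically linear $\ell$ is a genuine stratified Morse function whose boundary behaviour is as claimed — that is, checking the Whitney conditions for $\overline Y$, the nondegeneracy of $\ell$ along every stratum, that $\ell$ is proper in the relevant range on $\overline Y$, and the normal-Morse-data computation that cuts the number of contributing faces down to $\binom ni$ in dimension $i$. This is precisely where ``stratified Morse theory for a manifold with corners'' does the work; the reduction, the convexity bound on the number of terms of $f_S$, and the final count are then routine.
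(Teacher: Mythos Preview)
Your high-level strategy matches the paper's---reduce to $N=n$, compactify to a manifold with corners, run stratified Morse theory with a log-linear function, and bound critical points on each stratum via~\cite{BS}---but the specific compactification you propose is incompatible with your Morse function, and this is a genuine gap rather than a detail to be filled in. The function $\ell=\sum_j\lambda_j\log x_j$ is $\pm\infty$ on every boundary face of any toric compactification of $\R^n_>$: on a face where some $x_j\to 0$ it is $-\infty$, and on a face at infinity it is $+\infty$. Hence $\ell$ does not extend to a smooth (or even continuous) function on $\overline Y$ and cannot serve as a stratified Morse function there. Your enumeration of boundary strata by subsets $S\subseteq\{1,\dots,n\}$ with $x_j=0$ for $j\notin S$ is also incomplete: it describes only the faces of the non-compact corner $\R^n_{\ge 0}$, whereas any \emph{projective} toric compactification has further faces at infinity that your sum omits.

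The paper resolves both issues at once by working in logarithmic coordinates $z=\Log x\in\R^n$ and intersecting $Z=\Log(X)$ with a large but \emph{finite} simplex $\Delta_M=\{z_i\ge -M_i,\ \sum_i z_i\le M_0\}$. Then $Z\cap\Delta_M$ is a compact manifold with corners on which the linear function $L_u(z)=u\cdot z$ is everywhere smooth, and the boundary strata arise by \emph{substituting} $z_i=-M_i$ (or $\sum_i z_i=M_0$) rather than sending variables to $0$ or $\infty$. After normalising so that $e^{z_1},\dots,e^{z_n}$ are among the terms of $\varphi$, each substitution $z_i=-M_i$ eliminates one variable and absorbs one exponential term into the constant, so the restricted sum on a face indexed by $S\subset\{1,\dots,n\}$ has $(n-|S|)+l+1$ terms and~\cite{BS} yields the factor $(n-|S|)^l$. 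The one extra facet $H_0=\{\sum_i z_i=M_0\}$, on which the term count would rise, is neutralised by a trick with no analogue in your outline: replacing $u$ by $u+\lambda(1,\dots,1)$ for $\lambda\gg 0$ leaves the critical points on $H_0$-strata unchanged but forces $L_u$ to take its \emph{maximum}, not its minimum, on the normal slice there, so by Proposition~\ref{prop:Morse_bounds} those critical points contribute nothing to $b_*$.
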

%%%%%%%%%%%%%%%%%%%%%%%% %%%%%%%%%%%%%%%%%%%%%%%%%%%%%%%%%%%%%%%%%%%%%%%%%%%%

The bound of Theorem~\ref{Th:Betti} is bounded above by the simpler
expression
 \begin{equation}\label{eq:smaller}
  (e^2+3) 2^{\binom{l}{2}}\, n^l \cdot 2^{n-3}\,,
 \end{equation}
which is smaller than the bound on the number of connected
components (zeroth Betti number) proven in~\cite{BRS}.

The idea of the proof is as follows. 
If $X\subset \R^n$ is compact,
then a generic monomial function will be a Morse function on $X$. 
The critical points
of this Morse function are positive solutions to a polynomial system with the same initial
monomials and we may apply the bound~\cite{BS}.
If $X$ is non compact, consider its image $Z=\Log(X)$ under the
coordinatewise logarithmic map (a homeomorphism) 
and take the intersection with a sufficiently large simplex $\Delta$
so that $b_*(Z)=b_*(Z \cap \Delta)$.
The intersection of $Z$ with faces of
$\Delta$ are parts of fewnomial hypersurfaces about which we have a control
on the number of variables and monomials. 
The singular space $Z \cap \Delta$ is stratified 
by these hypersurfaces in the faces of $\Delta$.
In fact $Z \cap \Delta$ is a manifold with corners. 

Stratified Morse theory as developed by M.~Goresky and R.~MacPherson
extends classical Morse theory to compact stratified spaces. 
While it is complicated to apply this theory in general,
this is quite simple for manifolds with corners.
A stratified Morse function is a function whose restrictions to strata
are usual Morse functions. 
By stratified Morse theory, $b_*(Z\cap\Delta)$ is bounded from above
by the total number of critical points of the restrictions to the
strata of a stratified Morse function. 
We use a generic linear function as a stratified Morse function on $Z \cap \Delta$.
Such a linear function comes from a monomial function on $X$, and the number of critical points
for each stratum may be estimated by the bound~\cite{BS} as in the compact case.

Khovanskii's bound~\eqref{eq:Khovanskii} is a special case of more
general bounds which he obtains for fewnomial complete intersections.
It remains an important open problem to adapt the methods
of~\cite{BS} to fewnomial complete intersections.
We remark that our results hold for polynomials with real-number exponents.
In fact, our proofs (and the proofs in~\cite{BS}) involve this added
generality. 
We first describe Morse theory for a manifold with corners in Section 1
and prove
Theorem~\ref{Th:Betti} in Section 2.

We would like to thank Jean-Jacques Risler for providing us useful historical references on
the subject.

%%%%%%%%%%%%%%%%%%%%%%%%%%%%%%%%%%%%%%%%%%%%%%%%%%%%%%%%%%%%%%%%%%%%%%%%%%%%%%%
%
%
\section{Morse theory for a manifold with corners}

In classical Morse theory, the topology of a compact differentiable 
manifold $X$ is inferred from critical points of a sufficiently general
smooth function $f\colon X\to\R$, called a Morse function.
For example, $b_*(X)$ is bounded above by
the number of critical points of any Morse function.
Goresky and MacPherson~\cite{GM88} develop a version of Morse theory for 
stratified spaces.
This is particularly simple for manifolds with corners, which for us will 
be the intersection of our fewnomial hypersurface with a 
large simplex whose every face meets it transversally.

We begin with sketches of classical and of stratified Morse theory, 
and then explain how stratified Morse theory applies to a manifold with corners.
For further discussion and proofs see~\cite{GM88}.

A smooth function $f\colon X\to\R$ on a compact differential manifold is
a \Blue{{\it Morse function}} if its critical values (in $\R$)  are distinct and each critical 
point (in $X$) of $f$ is non degenerate (the Hessian matrix of second 
partial derivatives has full rank).
This implies that the critical points are discrete and there
are finitely many of them.
For each $c\in \R$, set $X_c:=f^{-1}(-\infty,c]$.
If $c$ is smaller than any critical value, then $X_c$ is empty, and if
$c$ is greater than all critical values, then $X=X_c$.
The Morse Lemmata
describe how the topological type of $X_c$ changes as 
$c$ increases from $-\infty$ to $\infty$.
The first Morse Lemma asserts that the topological type of $X_c$ is constant
for all $c$ lying in an open interval that contains no critical values.
The second Morse Lemma asserts that if $(a,b)$ contains a unique critical value $c=f(p)$,
then the pair $(X_b, X_a)$ is homeomorphic to the pair 
$(D^\lambda\times D^{n-\lambda},(\partial D^\lambda)\times D^{n-\lambda})$.
Here, $X$ has dimension $n$, $D^m$ is a closed disc of dimension $m$, and $\lambda$ is the
number of negative eigenvalues of the Hessian matrix of $f$ at $p$.
The long exact sequence of a pair  and induction on the critical values $c$ shows that
the sum of the Betti numbers of $X$ is bounded above by the number of critical points of
$f$.\smallskip 

Suppose now that $X$ is a Whitney stratified space, 
which we assume is embedded in an ambient manifold $W$.
A smooth function $f\colon X\to \R$ is the restriction to $X$ of a smooth 
function on $W$.
A \Blue{{\it critical point}} of $f$ is a critical point of the restriction of $f$ to any
stratum. 
(Each stratum in a Whitney stratified space is a manifold.)
A \Blue{{\it Morse function}} $f\colon X\to \R$ is a smooth function on $X$ whose critical
  values are distinct, and at each critical point $p$ of $F$, the restriction of $f$ to
  the stratum  containing $p$ is non degenerate at $p$.
There is a third condition that the differential of $f$ at $p$ does not annihilate
any limit of tangent spaces to any stratum other than the stratum containing $p$.

In stratified Morse theory, the first Morse Lemma holds as before and the second
Morse Lemma is modified as follows.
Let $p$ be a critical point of $f$ lying in a stratum $S$ of $X$.
Then let $D(p)$ be a small disk in $W$ transversal to the stratum $S$ 
such that $D(p) \cap S=\{p\}$ and call its 
intersection $N(p)$ with $X$ the \Blue{{\it normal slice}} to $S$ at $p$.
\Blue{{\it Normal Morse data}} at $p$ are a pair $(A,B)$, where 
$A$ is the set of points $x$ in $N(p)$ for which $|f(x)-f(p)|\leq\epsilon$ and 
$B$ is that part of the boundary of $A$ where $f(x)=f(p)-\epsilon$, 
and $\epsilon$ is any sufficiently small positive number.
The \Blue{{\it tangential Morse data}} at $p$ are the pair 
$(D^\lambda\times D^{n-\lambda},(\partial D^\lambda)\times D^{n-\lambda})$
appearing in the second Morse Lemma for the Morse function $f$ restricted to the stratum
$S$, which is a manifold of dimension $n$.
The second Morse Lemma in stratified Morse theory asserts that if 
the interval $(a,b)$ contains a unique critical value $c=f(p)$,
then the pair $(X_b, X_a)$ is homeomorphic the product of pairs
 \begin{equation}\label{Eq:SMT}
   (\mbox{normal Morse data at $p$}) \times
   (\mbox{tangential Morse data at $p$})\,.
 \end{equation}
(Recall that the product $(A,B)\times(A',B')$ of pairs is the pair
$(A\times A',\; A\times B'\cup A'\times B)$.)
In general, we must have detailed information about the interaction between the Morse
function and the stratification to use stratified Morse theory.

Such detailed information is available for
manifolds with corners.
Let $\R_\geq:=[0,\infty)$ be the non negative real numbers and $\R^m_\geq$ be the
  non negative orthant in $\R^m$.
A \Blue{{\it manifold with corners}} is a compact topological space $X$ with a covering by
charts, each homeomorphic to $\R^m_\geq\times\R^n$, where $m+n$ is the dimension of $X$.
Each point of $X$ has a well-defined tangent space isomorphic to $\R^{m+n}$,
{\it and} has a \Blue{{\it tangent cone}} isomorphic to $\R^m_\geq\times\R^n$.
Points with tangent cone isomorphic to $\R^m_\geq\times\R^n$ for $m$ fixed form a
submanifold of dimension $n$.
All such submanifolds form the \Blue{{\it boundary strata}} of $X$.

Suppose that $f\colon X\to \R$ is a Morse function.
Let $p$ be a point in a boundary stratum locally homeomorphic to $\R^m_\geq\times\R^n$.
The normal slice $N(p)$ to such a point is a neighborhood of the origin in the cone
$\R^m_\geq$. 
Due to the third condition on Morse functions, 
there are exactly two possibilities for the normal Morse data $(A,B)$ at $p$.
The first component $A$ is homeomorphic to the $m$-simplex
\[
   \Delta_m\ :=\ \{x\in\R^m_\geq : |x|:=x_1+\dotsb+x_m\leq 1\}
   \quad(\simeq\ D^m)
\]
and the second component $B$ is either the empty set $\emptyset$
(if $f(p)$ is locally the minimum value of $f$ on $N(p)$) 
or a contractible subset of the $(m{-}1)$-simplex $|x|=1$ otherwise.
(If $f(p)$ is a local maximum, then $B$ is the full $(m{-}1)$-simplex.)
We illustrate this in Figure~\ref{F:Normal}.
%%%%%%%%%%%%%%%%%%%%%%%%%%%%%%%%%%%%%%%%%%%%%%%%%%%%%%%%%%%%%%%%%%
\begin{figure}[htb]
\[
\begin{picture}(350,115)(-10,-10)
   \put( 35,56){$A$}   \put(-10,53){\Brown{$p$}}
   \put(  0,51){\includegraphics{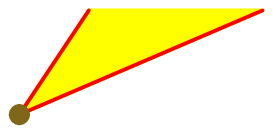}}
   \put( 30,90){\Blue{$B=\emptyset$}}

   \put(143,90){$A$}   \put(95,53){\Brown{$p$}}
   \put(105,21){\includegraphics{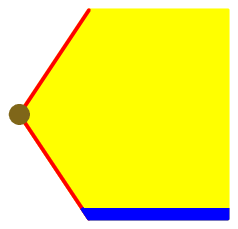}}
   \put(143,12){\Blue{$B$}}

   \put(245,48){$A$}   \put(202,53){\Brown{$p$}}
   \put(210,21){\includegraphics{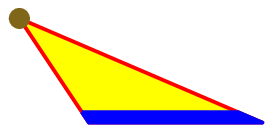}}
   \put(250,12){\Blue{$B$}}

   \put(315,0){\includegraphics{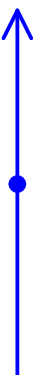}}
   \put(140,-13){$f$}                 \put(325,85){$\R$}
   \put(125,-2){\vector(1,0){40}}
  \end{picture}
\]
\caption{Normal Morse data}
\label{F:Normal}
\end{figure}
%%%%%%%%%%%%%%%%%%%%%%%%%%%%%%%%%%%%%%%%%%%%%%%%%%%%%%%%%%%%%%%%%%

Observe that in the case when $B=\emptyset$, we have 
 \[
    (A,B)\times(D^\lambda\times D^{n-\lambda},(\partial D^\lambda)\times D^{n-\lambda})
     \ \simeq\ 
   (D^\lambda\times D^{m+n-\lambda},(\partial D^\lambda)\times D^{m+n-\lambda})
 \]
Thus this critical point could contribute to the sum of the Betti numbers of $X$.
On the other hand, when $B\neq\emptyset$, the pair 
$(B,B)$ is a deformation retract of $(A,B)$, and so the critical point $p$
does not contribute to the sum of the Betti numbers of $X$, as the topology of $X_a$ does
not change as $a$ passes $f(p)$.

%%%%%%%%%%%%%%%%%%%%%%%%%%%%%%%%%%%%%%%%%%%%%%%%%%%%%%%%%%%%%%%%%%%%%%%%%%%%
\begin{proposition}\label{prop:Morse_bounds}
 The sum of the Betti numbers of a manifold $Z$ with corners is at most the number of
 critical points $p$ of a Morse function $f$ for $Z$ where the minimum of $f$ on the
 normal slice to $Z$ at $p$ is attained at $p$.
\end{proposition}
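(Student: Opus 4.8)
The plan is to run the classical Morse-theoretic induction on critical values, now fed by the stratified Morse Lemmata recalled above and by the explicit description of the normal Morse data at a boundary point of a manifold with corners. Since $Z$ is compact and the restriction of $f$ to each of the finitely many boundary strata is an ordinary Morse function, $f$ has only finitely many critical points, and --- $f$ being a Morse function --- they have pairwise distinct critical values $c_1<\dots<c_k$, say $c_j=f(p_j)$. I would fix regular values $a_0<c_1<a_1<\dots<a_{k-1}<c_k<a_k$, so that $Z_{a_0}=\emptyset$ and $Z_{a_k}=Z$; by the first Morse Lemma the topological type of $Z_c$ does not change as $c$ varies within an interval containing no critical value, so it suffices to control how $b_*$ changes as $c$ crosses each single critical value $c_j$, i.e.\ to compare $b_*(Z_{a_{j-1}})$ with $b_*(Z_{a_j})$.

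The heart of the matter is the second (stratified) Morse Lemma~\eqref{Eq:SMT}: the pair $(Z_{a_j},Z_{a_{j-1}})$ is homeomorphic to the product of the normal Morse data $(A,B)$ at $p_j$ and the tangential Morse data $(D^\lambda\times D^{n-\lambda},(\partial D^\lambda)\times D^{n-\lambda})$. Taking homology with coefficients in a fixed field (which recovers the usual Betti numbers over $\mathbb{Q}$), the long exact sequence of the pair gives $b_*(Z_{a_j})\leq b_*(Z_{a_{j-1}})+\dim H_*(Z_{a_j},Z_{a_{j-1}})$. Here I would invoke the case analysis preceding the Proposition, summarized in Figure~\ref{F:Normal}. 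If the minimum of $f$ on the normal slice $N(p_j)$ is attained at $p_j$ --- in particular whenever $p_j$ lies in the interior stratum, where $N(p_j)$ is a single point --- then $B=\emptyset$, the product pair is homotopy equivalent to $(D^\lambda\times D^{m+n-\lambda},(\partial D^\lambda)\times D^{m+n-\lambda})$, whose relative homology is that of $(D^\lambda,\partial D^\lambda)$, so $\dim H_*(Z_{a_j},Z_{a_{j-1}})=1$ and $b_*(Z_{a_j})\leq b_*(Z_{a_{j-1}})+1$. Otherwise $B\neq\emptyset$, the pair $(A,B)$ deformation retracts onto $(B,B)$, hence the product pair deformation retracts onto a pair of the form $(K,K)$, which forces $H_*(Z_{a_j},Z_{a_{j-1}})=0$ and $b_*(Z_{a_j})=b_*(Z_{a_{j-1}})$.

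Summing these inequalities over $j=1,\dots,k$ and using $b_*(Z_{a_0})=b_*(\emptyset)=0$ then gives
\[
  b_*(Z)\ \leq\ \#\{\, j \,:\, \text{the minimum of $f$ on $N(p_j)$ is attained at $p_j$} \,\}\,,
\]
which is exactly the asserted bound. I do not anticipate a genuine obstacle: all the real content already sits in the stratified Morse Lemmata and in the preceding discussion of normal Morse data. The one place to be careful is matching the dichotomy $B=\emptyset$ versus $B\neq\emptyset$ to the two kinds of critical points --- those counted in the statement versus those that leave $b_*$ unchanged --- and, for the latter, citing the deformation retraction $(A,B)\simeq(B,B)$ recorded above, which is precisely what renders such critical points invisible to the sum of the Betti numbers.
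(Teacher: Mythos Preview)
Your proposal is correct and follows essentially the same approach as the paper: the paper does not give a separate formal proof of the proposition, but the discussion immediately preceding it (the two cases $B=\emptyset$ and $B\neq\emptyset$, together with the remark that in the latter case $(A,B)$ retracts onto $(B,B)$ so the topology of $X_a$ is unchanged) is exactly the argument you have written out in detail. You have simply made the induction on critical values and the use of the long exact sequence of the pair explicit.
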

%%%%%%%%%%%%%%%%%%%%%%%%%%%%%%%%%%%%%%%%%%%%%%%%%%%%%%%%%%%%%%%%%%%%%%%%%%%%

%%%%%%%%%%%%%%%%%%%%%%%%%%%%%%%%%%%%%%%%%%%%%%%%%%%%%%%%%%%%%%%%%%%%%%%%%%%%
\begin{example}
 Proposition~\ref{prop:Morse_bounds} is illustrated by the cannoli shell, which 
 is a manifold with corners.
 It is a cylinder $D^1\times S^1$ having two boundary strata,
 each homeomorphic to the circle $S^1$.  
 The height function of Figure~\ref{Fig:cannoli} is a Morse function for the cannoli
 shell with four critical points.
 Only the two smallest critical values contribute to its topology and its Betti numbers,
 by Proposition~\ref{prop:Morse_bounds}.
\end{example}
%%%%%%%%%%%%%%%%%%%%%%%%%%%%%%%%%%%%%%%%%%%%%%%%%%%%%%%%%%%%%%%%%%%%%%%%%%%%

%%%%%%%%%%%%%%%%%%%%%%%%%%%%%%%%%%%%%%%%%%%%%%%%%%%%%%%%%%%%%%%%%%%%%%%%%%%%
\begin{figure}[htb]
\[
  \begin{picture}(160,150)(-5,0)
    \put(  0,  0){\includegraphics[height=160pt]{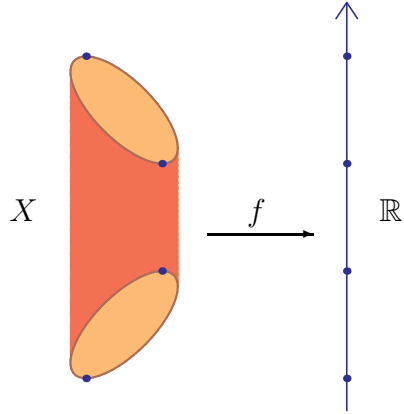}}
    \put(  0, 70){$X$}    \put(140,70){$\R$}
    \put( 90, 70){$f$}
    \put( 75, 65){\vector(1,0){40}}
  \end{picture}
\]
\caption{Morse function on the cannoli shell}\label{Fig:cannoli}
\end{figure}
%%%%%%%%%%%%%%%%%%%%%%%%%%%%%%%%%%%%%%%%%%%%%%%%%%%%%%%%%%%%%%%%%%%%%%%%%%%%

%%%%%%%%%%%%%%%%%%%%%%%%%%%%%%%%%%%%%%%%%%%%%%%%%%%%%%%%%%%%%%%%%%%%%%%%%%%%
%
%
\section{Proof of Theorem~\ref{Th:Betti}}

Let $f(x_1,\dotsc,x_N)$ be a real Laurent polynomial with $n{+}l{+}1$ exponent vectors that
span an $n$-dimensional affine subspace of $\R^N$ such that
$\Blue{X}:=\calV(f)\subset\R^N_>$ is a smooth hypersurface.
Rather than work with polynomials in $\R^N_>$, we work with exponential sums in $\R^N$.
These notions are related via a logarithmic change of coordinates.
Consider the isomorphisms of Lie groups.
\[
  \begin{array}{rclcrcl}
   \mbox{Exp}\colon\ \R^N&\longrightarrow&\R^N_>&\qquad&
   \Log\colon\ \R^N_>&\longrightarrow&\R^N\\\rule{0pt}{14pt}
   (z_1,\dotsc,z_N)&\longmapsto&(e^{z_1},\dotsc,e^{z_N})&\qquad&
   (x_1,\dotsc,x_N)&\longmapsto&(\log(x_1),\dotsc,\log(x_N))
  \end{array}
\]
Under this isomorphism monomials $x^\alpha$ correspond to exponentials $e^{z\cdot\alpha}$,
and so the fewnomial $f=\sum_i c_i x^{\alpha_i}$ corresponds to the exponential sum
\[
   \Blue{\varphi}\ :=\ \sum_{i=0}^{n+l} c_i e^{z\cdot\alpha_i}\ .
\]
Let $\Blue{Z}:=\calV(\varphi)\subset\R^N$ be the hypersurface defined by $\varphi$, which
is homeomorphic to $X$.
For exponential sums, it is quite natural to allow the exponents $\alpha_i$ to be real vectors.
We will prove Theorem~\ref{Th:Betti} in these logarithmic coordinates
and with real exponents.\medskip

%%%%%%%%%%%%%%%%%%%%%%%%%%%%%%%%%%%%%%%%%%%%%%%%%%%%%%%%%%%%%%%%%%%%%%%%%%%%%
\noindent{\bf Theorem~\ref{Th:Betti}$'$.}
{\it
  The sum of the Betti numbers of a hypersurface in $\R^N$ defined by 
  an exponential sum with $n{+}l{+}1$ terms whose exponent vectors have
  affine span of dimension $n$ is at most 
\[
  \frac{e^2+3}{4} 2^{\binom{l}{2}}
    \cdot \sum_{i=0}^{n} \tbinom{n}{i} i^l\ .
 \]
}\medskip
%%%%%%%%%%%%%%%%%%%%%%%%%%%%%%%%%%%%%%%%%%%%%%%%%%%%%%%%%%%%%%%%%%%%%%%%%%%%%

Multiplying $\varphi$ by $e^{-z\cdot\alpha_0}$, we may assume that $\alpha_0=0$.
Then $\alpha_1,\dotsc,\alpha_{n{+}l}$ span an $n$-dimensional linear subspace of $\R^N$.
After a linear change of coordinates, we may assume that $\varphi$ only involves the first
$n$ variables, and thus the hypersurface $Z$ becomes a cylinder
\[
   Z\ \simeq\ \{ z\in \R^n : \varphi(z)=0\}\times\R^{N-n} \,.
\]
Thus it suffices to prove Theorem~\ref{Th:Betti}$'$ when $N=n$.

Since the exponents $\alpha_1,\dotsc,\alpha_{n+l}$ span $\R^n$, we may assume that 
the first $n$ are the standard unit basis vectors in $\R^n$, and thus $\varphi$ includes
the coordinate exponentials $e^{z_i}$ for $i=1,\dotsc,n$.
Let $M:=(M_0,M_1,\dotsc,M_n)$ be a list of positive numbers and set 
\[
  \Blue{\Delta_M}\ :=\ \{z\in\R^n : z_i\geq -M_i, \ i=1,\dotsc,n
    \quad\mbox{and}\quad \sum_iz_i\leq M_0\}\,,
\]
which is a non empty simplex.
We will use stratified Morse theory to bound the Betti numbers of 
$\Blue{Y}:=Z\cap \Delta_M$ when $M$ is general.

%%%%%%%%%%%%%%%%%%%%%%%%%%%%%%%%%%%%%%%%%%%%%%%%%%%%%%%%%%%%%%%%%%%%%%%%%%%%
\begin{theorem}\label{Th:bound}
  For $M$ general, the sum of the Betti numbers of $Y$ is at most
\[
  \frac{e^2+3}{4} 2^{\binom{l}{2}}
    \cdot \sum_{i=0}^{n} \tbinom{n}{i} i^l\ .
 \]
\end{theorem}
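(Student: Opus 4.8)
The plan is to apply Proposition~\ref{prop:Morse_bounds} to the manifold with corners $Y = Z \cap \Delta_M$, using a generic linear function as the stratified Morse function, and then to count the admissible critical points (those where the linear function attains its minimum on the normal slice) stratum by stratum via the fewnomial bound of~\cite{BS}. First I would verify that, for $M$ general, $Y$ really is a manifold with corners: each facet of $\Delta_M$ is cut out either by $z_i = -M_i$ or by $\sum_i z_i = M_0$, and for generic $M$ the hypersurface $Z$ meets every face of $\Delta_M$ (of every codimension) transversally, since $Z$ is smooth and the family of simplices $\Delta_M$ moves the faces in general position. The boundary strata of $Y$ are then indexed by subsets $F \subseteq \{0,1,\dots,n\}$: the stratum $Y_F$ is the transverse intersection of $Z$ with the relative interior of the corresponding face of $\Delta_M$.

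Next I would choose the Morse function. A generic linear function $\ell(z) = \sum a_i z_i$ on $\R^n$ restricts to a function on $Y$ whose critical points on each stratum $Y_F$ are isolated and nondegenerate, and whose critical values are distinct; genericity of $\ell$ (together with genericity of $M$) also guarantees the third, normal condition in the definition of a stratified Morse function. The key observation, already used in the compact case sketched in the introduction, is that on a face of $\Delta_M$ the equations $z_i = -M_i$ let us eliminate those variables, so the restriction of $\varphi$ to that face is again an exponential sum, and a critical point of a linear function on $Z \cap (\text{face})$ is a solution of a square system obtained by adjoining $n - |F|$ further linear equations (the gradient conditions) after the logarithmic substitution $x_i = e^{z_i}$; this is precisely a polynomial system to which the bound $\frac{e^2+3}{4} 2^{\binom{l}{2}} n^l$ applies, with the ambient dimension and monomial count adjusted for the face. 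Concretely, on the stratum corresponding to setting $k$ of the coordinate variables to their boundary values (the facets $z_i = -M_i$), the number of remaining variables drops and the surviving exponent vectors span an affine space of dimension at most $i$ for the relevant $i$, so the count of admissible critical points on that stratum is at most $\frac{e^2+3}{4} 2^{\binom{l}{2}} i^l$; summing over the $\binom{n}{i}$ strata of each type yields the stated total $\frac{e^2+3}{4} 2^{\binom{l}{2}} \sum_{i=0}^n \binom{n}{i} i^l$. The facet $\sum_i z_i = M_0$ and its sub-faces must be handled as well, and I expect them to be absorbed into the same bookkeeping because intersecting with that hyperplane also reduces the effective dimension by one without increasing the number of monomial terms.

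The main obstacle, I expect, is the careful combinatorial accounting of which strata contribute and with what parameters: one must check that on every face of $\Delta_M$ the restricted exponential sum still has at most $n{+}l{+}1$ terms (no accidental cancellation forces this, and eliminating variables via $z_i = -M_i$ only rescales coefficients), that its exponent vectors span an affine space of the correct dimension $i$, and that the square system built from the gradient condition genuinely has the shape required by~\cite{BS} (in particular that adding linear equations does not increase the monomial count, since linear forms in $z$ are not monomials in $x$ — this point needs the observation that one imposes $d\ell$ proportional to $d\varphi$ on the stratum, i.e. a Lagrange-multiplier system, and then clears denominators). Assembling these pieces, Proposition~\ref{prop:Morse_bounds} gives $b_*(Y) \le \sum_{F} (\text{admissible critical points on } Y_F) \le \frac{e^2+3}{4} 2^{\binom{l}{2}} \sum_{i=0}^n \binom{n}{i} i^l$, which is the claim; Theorem~\ref{Th:Betti}$'$ then follows by taking $M$ large enough that $b_*(Z) = b_*(Y)$, which is possible because $Z$ is homeomorphic to a cylinder over a hypersurface in $\R^n$ and hence has the homotopy type of a finite complex exhausted by the simplices $\Delta_M$.
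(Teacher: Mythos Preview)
Your overall architecture matches the paper's proof: make $Y$ a manifold with corners by choosing $M$ generically, use a generic linear function $L_u$ as a stratified Morse function, and bound the admissible critical points on each stratum by the fewnomial bound of~\cite{BS}, noting that directional derivatives $D_v\varphi$ are exponential sums with the same exponents as $\varphi$.

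However, there is a genuine gap at exactly the place you flagged as an expectation. On a face contained in the hyperplane $H_0=\{\sum_i z_i=M_0\}$, eliminating a variable via $z_j=M_0-\sum_{i\neq j}z_i$ reduces the dimension by one but does \emph{not} eliminate any exponential term: the term $e^{z_j}$ becomes a new exponential in the remaining variables rather than a constant. Thus on a stratum indexed by $S$ with $0\in S$ the restricted sum $\varphi_S$ has $n-|S|$ variables but still (up to) $(n-|S|)+(l+1)+1$ terms, so the fewnomial parameter jumps from $l$ to $l+1$ and the critical-point bound on that stratum is $\frac{e^2+3}{4}\,2^{\binom{l+1}{2}}(n-|S|)^{l+1}$, not $\frac{e^2+3}{4}\,2^{\binom{l}{2}}(n-|S|)^{l}$. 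Summing these larger contributions over all faces containing $H_0$ would overshoot the stated bound; they are not ``absorbed into the same bookkeeping.''

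The paper resolves this with a trick you are missing: one tilts the Morse function, replacing $u$ by $u+\lambda(1,\dotsc,1)$ for $\lambda\gg 0$. Since $L_{(1,\dotsc,1)}$ is constant on $H_0$, this does not move the critical points on any stratum with $0\in S$, but on the normal slice at such a point $p$ the function $L_{(1,\dotsc,1)}$ attains its \emph{maximum} at $p$ (because $\sum_i z_i\le M_0$ on $\Delta_M$). Hence for large $\lambda$ the tilted function does not attain its minimum on $N(p)$ at $p$, and by Proposition~\ref{prop:Morse_bounds} none of these critical points contribute. Only strata with $0\notin S$ remain, i.e.\ subsets $S\subset\{1,\dotsc,n\}$, and summing $\frac{e^2+3}{4}\,2^{\binom{l}{2}}(n-|S|)^l$ over these gives exactly $\frac{e^2+3}{4}\,2^{\binom{l}{2}}\sum_{i=0}^n\binom{n}{i}i^l$.
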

%%%%%%%%%%%%%%%%%%%%%%%%%%%%%%%%%%%%%%%%%%%%%%%%%%%%%%%%%%%%%%%%%%%%%%%%%%%

%%%%%%%%%%%%%%%%%%%%%%%%%%%%%%%%%%%%%%%%%%%%%%%%%%%%%%%%%%%%%%%%%%%%%%%%%%%
\begin{proof}[Proof of Theorem~$\ref{Th:Betti}'$]
For any $r>0$, set
$$
  \Blue{Z_r}\ :=\ \{ z \in Z :  \| z\|  <r \}\,.
$$
By~\cite[Corollary 9.3.7]{BCR}, there is some $R>0$ such that if
$r \geq R$ then $Z_r$ is deformation retract of $Z$ and
$Z_R$ is a deformation retract of $Z_r$.

 Choose $M$ and $\Blue{r}>R$ so that $\Delta_M$ is sandwiched between the balls of
 radius $R$ and $r$ centered at the origin.
 Let $\Blue{\rho}\colon Z_r\to Z_R$ be the retraction.
 We have the maps 
\[
   Z_R\ \hookrightarrow\ Y=Z\cap\Delta_M\ 
        \hookrightarrow\ Z_r\ \xrightarrow{\ \rho\ }\ Z_R\,,
\]
 whose composition is the identity.
 The induced maps on the $i$th homology groups,
\[
   H_i(Z_R)\ \longrightarrow\ H_i(Y)\ \longrightarrow\ 
   H_i(Z_r)\ \xrightarrow{\ \rho_*\ }\ H_i(Z_R)\,,
\]
 have composition the identity.
 This gives the inequality
\[
    \dim H_i(Y)\ \geq\ \dim H_i(Z_R)\ =\ \dim H_i(Z)\,.
\]
 Summing over $i$ shows that Theorem~\ref{Th:Betti}$'$ is a consequence of 
 Theorem~\ref{Th:bound}.
\end{proof}
%%%%%%%%%%%%%%%%%%%%%%%%%%%%%%%%%%%%%%%%%%%%%%%%%%%%%%%%%%%%%%%%%%%%%%%%%%%

%%%%%%%%%%%%%%%%%%%%%%%%%%%%%%%%%%%%%%%%%%%%%%%%%%%%%%%%%%%%%%%%%%%%%%%%%%%
\begin{proof}[Proof of Theorem~$\ref{Th:bound}$]
 Given positive numbers $M=(M_0,M_1,\dotsc,M_n)$, define affine hyperplanes in
 $\R^n$ 
\[
   \Blue{H_0}\ :=\ \{z : {\textstyle \sum_i} z_i=M_0\}
   \qquad\mbox{and}\qquad
   \Blue{H_i}\ :=\ \{z : z_i=-M_i\}\,,\quad\mbox{for}\ i=1,\dotsc,n\,.
\]
 For each proper subset $S\subset\{0,\dotsc,n\}$, define an affine linear subspace
\[
   \Blue{H_S}\ :=\ \bigcap_{i\in S} H_i\,.
\]
 Since each $M_i>0$, this has dimension $n-|S|$, and these subspaces are the 
 affine linear subspaces supporting the faces of the simplex $\Delta_M$.

 Choose $M$ generic so that for all $S$ the subspace $H_S$ meets $Z$
 transversally.
 For each $S$, set $\Blue{Z_S}:=Z\cap H_S$. 
 This is a smooth hypersurface in $H_S$ and therefore has
 dimension $n-|S|-1$.
 The boundary stratum $Y_S$ of $Y=Z\cap \Delta_M$ lying in the face supported by $H_S$ is an
 open subset of $Z_S$.

  For a non zero vector $u\in \R^n$, the directional derivative $D_u\varphi$ is
\[
   \sum_{i=1}^{n{+}l}  (u\cdot\alpha_i) c_i e^{z\cdot\alpha_i}\ ,
\]
 which is an exponential sum having the same exponents as $\varphi$.
 Let \Blue{$L_u$} be the linear function on $\R^n$ defined by $z\mapsto u\cdot z$.

 The critical points of the function $L_u$ restricted to $Z$ are the zeroes of the system
\[
    \varphi(z)\ =\ 0
    \qquad\mbox{and}\qquad
    D_v\varphi(z)\ =\ 0\quad\mbox{for }v\in u^\perp\,.
\]
 When $u$ is general and we choose a 
%
%   General insures that every equation has the same monomials....
%
 basis for $u^\perp$, this becomes a system of $n$
 exponential sums in $n$ variables, all involving the same $n{+}l{+}1$ exponents.
 By the fewnomial bound in~\cite{BS}, the number of solutions is at most
\[
   \frac{e^2+3}{4} 2^{\binom{l}{2}} n^l\,.
\]

 We use this to estimate the number of critical points of the function $L_u$ restricted to
 $Z_S$.
 The restriction of $\varphi$ to $H_S$ defines $Z_S$ as a hypersurface in
 $H_S$. 
 We determine this restriction.
 If $i\in S$ with $i>0$ then we may use the equation $z_i=-M_i$ to eliminate the variable
$z_i$ and the exponential $e^{z_i}$ from $\varphi$.
 If $0\in S$, then we choose $j\not\in S$ and use the equation $\sum_iz_i=M_0$ to
 eliminate $z_j$ from $\varphi$.
 Let \Blue{$\varphi_S$} be the result of this elimination.
 It is an exponential sum in $n-|S|$ variables and its number of terms is at most
\[
  \begin{array}{rl}
   (n-|S|)+l+1&\mbox{ if } 0\not\in S\\
   (n-|S|)+(l+1)+1&\mbox{ if } 0\in S\rule{0pt}{18pt}
  \end{array}
\]

Thus if $u$ is a general vector in $\R^n$, then the 
 number of critical points of the linear function $L_u|_{H_S}$ on $Z_S$ is at most
\[
  \begin{array}{rl}
   \frac{e^2+3}{4}2^{\binom{l}{2}} (n-|S|)^l&\mbox{ if } 0\not\in S\\
   \frac{e^2+3}{4}2^{\binom{l+1}{2}}
(n-|S|)^{l+1}&\mbox{ if } 0\in S\rule{0pt}{18pt}
  \end{array}
\]
 We use this estimate and stratified Morse theory to bound the Betti numbers of $Y$.

 Let $u$ be a general vector in $\R^n$ such that $L_u$ is a Morse function for the
 stratified space $Y$.
 By Proposition~\ref{prop:Morse_bounds}, the sum of the Betti numbers of 
 $Y$ is bounded by the number of critical points $p$ of $L_u$ for which 
 $L_u$ achieves its minimum on the normal slice $N(p)$ at $p$.
 Since the strata $Y_S$ of $Y$ are open subsets of the manifolds $Z_S$, this number is
 bounded above by the number of such critical points of $L_u$ on the manifolds $Z_S$.
 We argue that we can alter $u$ so that no critical point in any $Z_S$ with $0\in S$
 contributes.
 That is, change $u$ so that for no critical point $p$ in a stratum $Z_S$ with 
 $0\in S$ is $p$ the local minimum of $L_u$ on the normal slice $N(p)$.

 Suppose that $p$ is a critical point of $L_u$ on a stratum $Z_S$ with $0\in S$.
 Then $p$ lies in $H_0$, and so the linear function $L_{(1,\dotsc,1)}$ restricted
 to the normal slice $N(p)$ at $p$ takes its maximum value $M_0$ at $p$.
 If we replace $u$ by $u+\lambda (1,\dotsc,1)$ we change $L_u$ on $H_S$ by the constant
 $\lambda M_0$ and $p$ will still be a critical point for $L_{u+\lambda (1,\dotsc,1)}$ on
 $Z_S$.
 If $\lambda$ is sufficiently large, then $L_{u+\lambda (1,\dotsc,1)}$ 
 does not achieve its  minimum value on $N(p)$ at $p$.

 There are finitely many critical points $p$ of $L_u$ on strata $Z_S$ with $0\in S$.
 Hence, there is a positive number $\lambda$ so that at each of these critical
 points $p$,  $L_{u+\lambda (1,\dotsc,1)}$ 
 does not achieve its
 minimum value on $N(p)$ at $p$.
 We may further choose $\lambda$ so that $L_{u+\lambda (1,\dotsc,1)}$ is a Morse function
 for the stratified space $Y$.
 Since only the critical points on strata $Z_S$ with $0\not\in S$ can contribute to the
 Betti numbers of $Y$, we see that its sum of Betti numbers is bounded above by
 \[
   \frac{e^2+3}{4} 2^{\binom{l}{2}}
      \sum_{S\subset\{1,\dotsc,n\}} (n-|S|)^l
   \ =\ 
   \frac{e^2+3}{4} 2^{\binom{l}{2}}
      \sum_{i=0}^n  \binom{n}{i} (n-i)^l\,.
 \]
 Since $\binom{n}{i}=\binom{n}{n-i}$, we replace $i$ by $n-i$ to complete the proof of
 Theorem~\ref{Th:bound}. 
\end{proof}
%%%%%%%%%%%%%%%%%%%%%%%%%%%%%%%%%%%%%%%%%%%%%%%%%%%%%%%%%%%%%%%%%%%%%%%%%%%

We remark that the idea of cutting a fewnomial variety with
a monomial hypersurface, which preserves the fewnomial structure,
is not ours, but may be found in papers of Rojas~\cite{Rojas} and Perrucci~\cite{Pe05},
who used this to bound the number of connected components of a fewnomial variety.

We deduce the formula~\eqref{eq:smaller} from Theorem~\ref{Th:Betti}.
Observe that 
\[
      \sum_{i=0}^n  \binom{n}{i} i^l\ =\ 
   n^l\ \sum_{i=0}^n  \binom{n}{i} \left(\frac{i}{n}\right)^l\,,
\]
and thus the sum is a decreasing function of $l$.
When $l=1$, this sum is 
\[
   \sum_{i=0}^n  \binom{n}{i} \frac{i}{n}\ =\ 
   \sum_{i=1}^n  \binom{n-1}{i-1} \ =\ 2^{n-1}\,.
\]
Substituting this into the formula of Theorem~\ref{Th:Betti} gives~\eqref{eq:smaller}.

%%%%%%%%%%%%%%%%%%%%%%%%%%%%%%%%%%%%%%%%%%%%%%%%%%%%%%%%%%%%%%%%%%%%%%%%%%%%

\providecommand{\bysame}{\leavevmode\hbox to3em{\hrulefill}\thinspace}
\providecommand{\MR}{\relax\ifhmode\unskip\space\fi MR }
% \MRhref is called by the amsart/book/proc definition of \MR.
\providecommand{\MRhref}[2]{%
  \href{http://www.ams.org/mathscinet-getitem?mr=#1}{#2}
}
\providecommand{\href}[2]{#2}

%%%%%%%%%%%%%%%%%%%%%%%%%%%%%%%%%%%%%%%%%%%%%%%%%%%%%%%%%%%%%%%%%%%%%%%%%%%%
\end{document}